\begin{document}

\setlength{\topmargin}{1cm} 

\newfont{\ef}{eufm10 scaled\magstep1}
\newfont{\efs}{eufm8}
\newfont{\bbl}{msbm10 scaled\magstep1}
\newfont{\cl}{cmsy10 scaled\magstep1}
\newfont{\cls}{cmsy8}

%
\newcommand{\St}{\mbox{\ef T}}
\newcommand{\XG}{\mbox{\ef X}}
\newcommand{\CG}{\mbox{\ef C}}
\newcommand{\DG}{\mbox{\ef D}}
\newcommand{\UG}{\mbox{\ef U}}
\newcommand{\UGs}{\mbox{\efs U}}
\newcommand{\LG}{\mbox{\ef L}}
\newcommand{\MG}{\mbox{\ef M}}
\newcommand{\NGG}{\mbox{\ef N}}
\newcommand{\PG}{\mbox{\ef P}}
\newcommand{\WG}{\mbox{\ef W}}
\newcommand{\XGG}{\mbox{\ef X}}
\newcommand{\XGGs}{\mbox{\efs X}}
\newcommand{\Zh}{\mbox{\bbl Z}}
\newcommand{\Nat}{\mbox{\bbl N}}
\newcommand{\Rd}{\mbox{\cl R}}
\newcommand{\Gh}{\mbox{\cl G}}
\newcommand{\Ghs}{\mbox{\cls G}}
\newcommand{\HGh}{\mbox{\cl H}}
\newcommand{\HGhs}{\mbox{\cls H}}
\newcommand{\Sy}{\mbox{\cl S}}
\newcommand{\Sys}{\mbox{\cls S}}
\newcommand{\Ty}{\mbox{\cl T}}
\newcommand{\Tys}{\mbox{\cls T}}
\newcommand{\Nc}{\mbox{\cl N}}
\newcommand{\Wsy}{\mbox{\cl W}}
\newcommand{\Vt}{\mbox{\cl V}}
\newcommand{\Vts}{\mbox{\cls V}}
\newcommand{\Ed}{\mbox{\cl E}}
\newcommand{\Bs}{\mbox{\cl B}}
\newcommand{\Ms}{\mbox{\cl M}}
\newcommand{\Ds}{\mbox{\cl D}}

\newtheorem{theorem}{Theorem}[section]
\newtheorem{lemma}[theorem]{Lemma}
\newtheorem{proposition}[theorem]{Proposition}
\newtheorem{corollary}[theorem]{{\bf Corollary}}

\theoremstyle{definition}
\newtheorem{definition}[theorem]{Definition}
\newtheorem{example}[theorem]{Example}
\newtheorem{xca}[theorem]{Exercise}

\theoremstyle{remark}
\newtheorem{remark}[theorem]{Remark}






\begin{center}

{\Large {\bf Uncountable locally free groups\\
and their group rings}}
\bigskip

{\small TSUNEKAZU NISHINAKA}
\medskip



{\Small University of Hyogo\\
8-2-1 Gakuen Nishimachi Nishiku \\
Kobe-City 651-2197 Japan\\
Email: nishinaka@econ.u-hyogo.ac.jp}\\


\end{center}




\renewcommand{\thefootnote}{\fnsymbol{footnote}}

\footnote[0]{\noindent
{\it 2000 Mathematics Subject Classification}:
20E05, 20E25, 20C07\\
{\it Key words and phrases}:
uncountable locally free group, primitive group ring.\\
This research was partially supported by Grants-in-Aid for 
Scientific Research (KAKEN) under grant no. 26400055\\
}

\begin{center}
\begin{tabularx}{13cm}{X}
{\Small {\sc Abstract.}
In this note, we show that an uncountable locally free group,
and therefore every locally free group, has a free subgroup 
whose cardinality is the same as that of $G$.
This result directly improve the main result in \cite{Ni11}
and establish the primitivity of group rings
of locally free groups.
}
\end{tabularx}
\end{center}



\section{INTRODUCTION}

A group $G$ is called locally free 
if all of its finitely generated subgroups are free.
As a consequence of Nielsen-Schreier theorem,
a free group is always locally free.
If the cardinality $|G|$ of $G$
is countable,
then $G$ is locally free
if and only if $G$ is an ascending union of free groups.
In particular,
$G$ is a locally free group which is not free
provided that it is 
a properly ascending union of non-abelian free groups of bounded finite rank.
In fact, in this case,
$G$ is infinitely generated and Hopfian and so it is not free
(also see \cite{Taka} and \cite{Hig}).
If $|G|$ is uncountable,
that is $|G|>\aleph_0$,
then it was studied in the context of almost free groups,
and it is also known that 
there exists an uncountable locally free group which is not free
(\cite{Hig51}).

Now, clearly,
if $G$ is a locally free group
with $|G|=\aleph_0$,
then $G$ has a free subgroup 
whose cardinality is the same as that of $G$.
In the present note,
we shall show that it is true for locally free groups of
any cardinality.
In fact, we shall prove the following theorem:

\begin{theorem}\label{MainTH}
If $G$ is a locally free group with 
$|G|>\aleph_0$,
then for each finitely generated subgroup $A$ of $G$,
there exists a subgroup $H$ of $G$ with $|H|=|G|$
such that $AH\simeq A*H$,
the free product of $A$ and $H$.

In particular,
$G$ has a free subgroup of the same cardinality as that of $G$.
\end{theorem}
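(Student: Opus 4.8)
The plan is to reduce the whole statement to a single \emph{extension step} and then iterate it transfinitely. Write $\kappa=|G|$, an uncountable cardinal, and recall that a locally free group is torsion-free. I will construct $H$ as the union of a continuous increasing chain of free subgroups $H_0\le H_1\le\cdots\le H_\alpha\le\cdots$ $(\alpha<\kappa)$, each $H_\alpha$ of rank $|\alpha|$, maintaining the invariant $\langle A,H_\alpha\rangle=A*H_\alpha$. At a limit ordinal $\lambda$ I set $H_\lambda=\bigcup_{\alpha<\lambda}H_\alpha$; since the chosen free bases are nested, their union is a free basis, so $H_\lambda$ is free of rank $|\lambda|$, and any reduced nontrivial word witnessing $\langle A,H_\lambda\rangle\ne A*H_\lambda$ would involve only finitely many elements of $H_\lambda$, hence lie in some $A*H_\alpha$, contradicting the invariant there. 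Taking $H=\bigcup_{\alpha<\kappa}H_\alpha$ then yields a free group of rank $\kappa$ with $\langle A,H\rangle=A*H$; in particular $H$ itself is free of cardinality $\kappa=|G|$, which is the free subgroup asserted in the last sentence.

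Everything therefore rests on the \textbf{successor step}, which is the following \emph{Core Lemma}: if $P\le G$ is free with $|P|<\kappa$, then there is $h\in G$ with $\langle P,h\rangle=P*\langle h\rangle$. Granting it, I apply it with $P=A*H_\alpha$ (free, of cardinality $\max(\aleph_0,|\alpha|)<\kappa$) and set $H_{\alpha+1}=\langle H_\alpha,h\rangle=H_\alpha*\langle h\rangle$; then $\langle A,H_{\alpha+1}\rangle=(A*H_\alpha)*\langle h\rangle=A*H_{\alpha+1}$, as required.

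To prove the Core Lemma I would first reformulate the free-product condition locally. Since $G$ is locally free, for every finitely generated $F_0\le P$ the group $\langle F_0,h\rangle$ is free, and because finitely generated free groups are Hopfian, the natural surjection $F_0*\langle h\rangle\twoheadrightarrow\langle F_0,h\rangle$ is an isomorphism precisely when $\operatorname{rank}\langle F_0,h\rangle=\operatorname{rank}(F_0)+1$. A reduced nontrivial word witnessing $\langle P,h\rangle\ne P*\langle h\rangle$ involves only finitely many elements of $P$, hence lies in some $\langle F_0,h\rangle$; thus $\langle P,h\rangle=P*\langle h\rangle$ holds \emph{iff} $\operatorname{rank}\langle F_0,h\rangle=\operatorname{rank}(F_0)+1$ for every finitely generated $F_0\le P$. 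Call $h$ \emph{bad} if this fails for some $F_0$, that is, if $h\in B_{F_0}:=\{g\in G:\operatorname{rank}\langle F_0,g\rangle\le\operatorname{rank}(F_0)\}$ for some finitely generated $F_0\le P$.

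The heart of the matter — and the step I expect to be the main obstacle — is to bound the number of bad elements. I claim each $B_{F_0}$ is countable: an element $g\in B_{F_0}$ does not freely increase the rank of $F_0$, so it is \emph{algebraically dependent} on $F_0$, and the set of such elements sits inside the algebraic (finite-rank) closure of $F_0$, which is itself finitely generated and hence countable. Establishing this finiteness is exactly where the fine structure of free groups (Nielsen--Schreier, the Schreier index formula, and the finiteness of core graphs for finitely generated subgroups) must be invoked, and it is the delicate point of the argument. Granting it, the bad set is a union of at most $|P|^{<\omega}=|P|$ countable sets, so it has cardinality at most $|P|<\kappa=|G|$. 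Consequently $G$ contains a non-bad element $h$ (necessarily $h\ne1$ and $h\notin P$), which proves the Core Lemma and, together with the transfinite construction above, the theorem.
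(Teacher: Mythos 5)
Your scaffolding (transfinite recursion with a one-element extension step, the Hopficity reformulation, and the cardinality bookkeeping $|P|^{<\omega}=|P|<\kappa$) is correct, and it matches the paper's proof in spirit: the paper also reduces everything to finding a single $g$ avoiding a small set of ``bad'' elements, except that it packages the recursion as a Zorn's-lemma maximality argument instead of an ordinal-indexed chain. But the one claim you yourself flag as the delicate point --- that each bad set $B_{F_0}$ is countable --- is the entire mathematical content of the theorem, and the justification you sketch for it is not just incomplete but false. You argue that $B_{F_0}$ sits inside an ``algebraic closure'' of $F_0$ that is \emph{finitely generated}, hence countable. In a locally free group no such finitely generated closure exists in general. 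Take $G=\bigcup_{i\geq 1}F^{(i)}$, where $F^{(i+1)}=\langle a_{i+1},b_{i+1}\rangle$ is free of rank $2$ and $F^{(i)}=\langle a_{i+1}^{2},b_{i+1}^{2}\rangle$, and let $F_0=F^{(1)}=\langle a_2^2,b_2^2\rangle$. Since $a_2^2=a_i^{2^{i-1}}$, we get $\langle F_0,a_i\rangle=\langle a_i,b_2^2\rangle$, of rank at most $2=r(F_0)$, so \emph{every} $a_i$ lies in $B_{F_0}$; yet $\bigcup_i\langle a_i\rangle$ is isomorphic to $\mathbb{Z}[1/2]$, which embeds in no free group, so no finitely generated subgroup of $G$ contains all the $a_i$. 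Thus $B_{F_0}$ is not contained in any finitely generated subgroup, and core-graph or Schreier-index arguments inside a single finitely generated free group cannot close the gap.

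What is true, and what the paper proves, is that $B_{F_0}$ is contained in a subgroup that is of \emph{finite rank in the local sense} (every finitely generated subgroup lies in a free subgroup of rank $\leq m$) but typically infinitely generated, and that such a subgroup is countable not by counting generators but by Mal'cev's theorem (the paper's Lemma 2.2): a locally free group of finite rank is countable. Concretely, the paper takes $m=\mu_G(F_0)$, applies Zorn's lemma to the poset of subgroups $M\supseteq F_0$ with $r(M)=m$ (checking this poset is inductive requires the little argument that rank does not jump under unions of chains), obtains a maximal such $M(F_0)$, and then proves the rank-increment lemma (Lemma 2.1): if $g\notin M(F_0)$, maximality forces $r(M(F_0)\langle g\rangle)=m+1$, which yields a rank-$m$ free subgroup $F_m\supseteq F_0$ of $M(F_0)$ with $F_m\langle g\rangle\simeq F_m*\langle g\rangle$, hence $F_0\langle g\rangle\simeq F_0*\langle g\rangle$. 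So $B_{F_0}\subseteq M(F_0)$, and $|M(F_0)|=\aleph_0$ by Mal'cev. These three ingredients --- the Zorn argument for $M(F_0)$, the rank-increment lemma, and Mal'cev's theorem --- are exactly what your proposal is missing, and without them (or an equivalent substitute) your Core Lemma, and hence the theorem, remains unproven.
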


Theorem~\ref{MainTH}
means that there is no need to
assume the existence of free subgroups in \cite[Theorem 1]{Ni11}.
That is, we can improve the theorem
and establish the primitivity of group rings
of locally free groups,
where a ring $R$ is (right) primitive 
provided it has a faithful irreducible (right) $R$-module.

\section{Proof of the theorem}

In order to prove Theorem~\ref{MainTH},
we prepare necessary notations 
and some lemmas which include a result due to Mal'cev \cite{Mal}.
Some of them might be trivial for experts
but we include their proofs for completeness.

For a finitely generated subgroup $H$ of a locally free group $G$,
$\mu_{G}(H)$ is defined to be the least positive integer $m$
such that $H\subseteq F_m$ for some
free subgroup $F_m$ of rank $m>0$ in $G$.
The rank $r(G)$ of $G$ is defined to be the maximum element
in 
$$\{\mu_{G}(H)\ |\ H 
\mbox{ is a finitely generated subgroup in } G\}$$
or $r(G)=\infty$.
We should note that
$r(G)$ is finite if and only if
for each finitely generated subgroup $H$ of $G$,
there exists a free subgroup $N$ of rank $r(G)$
such that $H\subseteq N$.
We should also note that for subgroups $H$, $M$ and $N$ of $G$
with $H\subseteq M\subseteq N$,
$\mu_{M}(H)\geq \mu_{N}(H)$ holds.

\begin{lemma}\label{M_lemma}
Let $G$ be a locally free group,
$D$ a finitely generated subgroup of $G$
with $\mu_{G}(D)=m$
and $M$ a subgroup of $G$ such that $D\subseteq M$ and $r(M)=m$.
For $g\in G\setminus M$,
set $M_g=M\langle g\rangle$;
the subgroup of $G$ generated by $g$ and the elements in $M$,
and let $r(M_g)=n$.

Then,
if $n>m$,
then $n=m+1$ 
and there exists a free subgroup $F_m$ of rank $m$
in $M$ such that
$D\subseteq F_m$ and $F_m\langle g\rangle$
is isomorphic to the free product $F_m*\langle g\rangle$.
\end{lemma}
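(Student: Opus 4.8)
The plan is to first pin down $n=r(M_g)$ by bounding it above by $m+1$, and then to single out, among the free subgroups of rank $m$ in $M$ containing $D$, one for which adjoining $g$ yields a free group of rank exactly $m+1$; the free product decomposition will then follow almost formally. I would begin by recording two easy facts: $M_g$, being a subgroup of the locally free group $G$, is itself locally free, and $\langle g\rangle$ is infinite cyclic because $G$ is torsion-free. Now let $K$ be an arbitrary finitely generated subgroup of $M_g$. Since every element of $M_g=M\langle g\rangle$ is a word in $g$ and finitely many elements of $M$, a finite generating set of $K$ involves only finitely many elements of $M$, so $K\subseteq\langle H,g\rangle$ for some finitely generated $H\subseteq M$. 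Replacing $H$ by $\langle H,D\rangle$ and using $r(M)=m$, I can find a free subgroup $F$ of rank $m$ in $M$ with $\langle H,D\rangle\subseteq F$, whence $K\subseteq\langle F,g\rangle$. As $\langle F,g\rangle$ is an $(m+1)$-generated subgroup of the locally free group $G$, it is free of rank at most $m+1$, and it is a free subgroup of $M_g$ containing $K$; therefore $\mu_{M_g}(K)\le m+1$. Since $K$ was arbitrary, $n=r(M_g)\le m+1$, and combined with the hypothesis $n>m$ this forces $n=m+1$.

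Next, because $n=m+1$ is finite, it is attained by some finitely generated subgroup: choose $K\subseteq M_g$ with $\mu_{M_g}(K)=m+1$ and run the construction above to obtain a free subgroup $F_m$ of rank $m$ in $M$ with $D\subseteq F_m$ and $K\subseteq P:=\langle F_m,g\rangle$. Then $P$ is free of some rank $r\le m+1$, and since $P$ is a free subgroup of $M_g$ containing $K$ we have $m+1=\mu_{M_g}(K)\le r\le m+1$. Hence $P=F_m\langle g\rangle$ is free of rank exactly $m+1$.

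It remains to identify $P$ with $F_m*\langle g\rangle$, and this is the step carrying the real content. Let $x_1,\dots,x_m$ be a free basis of $F_m$. Since $g\notin M\supseteq F_m$, the $m+1$ elements $x_1,\dots,x_m,g$ are distinct and generate $P$, so $P$ is a free group of rank $m+1$ generated by exactly $m+1$ elements. Here I would invoke the Hopfian property of finitely generated free groups: the homomorphism from the free group on symbols $z_1,\dots,z_{m+1}$ sending each $z_i$ to the corresponding generator is surjective onto $P$, and composing with an isomorphism $P\cong$ (free group of rank $m+1$) gives a surjective endomorphism of a finitely generated free group, which must be an isomorphism. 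Thus $x_1,\dots,x_m,g$ is a free basis of $P$, and consequently $P=\langle x_1\rangle*\cdots*\langle x_m\rangle*\langle g\rangle=F_m*\langle g\rangle$, which together with $D\subseteq F_m$ is exactly the assertion. I expect the genuine obstacle to lie precisely in this last point, namely upgrading the numerical coincidence ``rank equals number of generators'' to the free product decomposition via Hopficity, whereas the rank bookkeeping in the earlier steps is routine once the definitions of $\mu$ and $r$ are unwound.
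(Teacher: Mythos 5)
Your proof is correct and takes essentially the same route as the paper's: using $r(M)=m$, you squeeze a finitely generated witness of $r(M_g)$ into $\langle F_m,g\rangle$ where $F_m$ is a rank-$m$ free subgroup of $M$ containing $D$, bound $r(\langle F_m,g\rangle)$ by $m+1$ to force $n=m+1$, and then upgrade ``rank $m+1$ with $m+1$ generators'' to the free product decomposition. The only differences are presentational: you establish $n\le m+1$ for an arbitrary finitely generated subgroup before choosing the witness (which also cleanly rules out $n=\infty$), and you spell out the Hopficity argument that the paper leaves implicit in its final step.
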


\begin{proof}
Since $r(M_g)=n$,
there exists a finitely generated subgroup $C$ of $M_g$
such that $\mu_{M_g}(C)=n$.
Since $C$ is finitely generated,
it can be easily seen that there exists
finite number of elements $a_1,\ldots,a_l$ in $M$
such that $C\subseteq\langle a_1,\ldots,a_l, g\rangle$.
We have then that
there exists a free subgroup $F_m$ of rank $m$ in $M$
such that $\langle a_1,\ldots,a_l\rangle D\subseteq F_m$
because of $r(M)=m$.
Since $C\subseteq F_m\langle g\rangle$ and $\mu_{M_g}(C)=n$,
we see that $r(F_m\langle g\rangle)\geq n$.
On the other hand,
$r(F_m\langle g\rangle)\leq m+1$ because of $g\not\in F_m$.
Combining these with the assumption $n>m$,
we get that $n=m+1=r(F_m\langle g\rangle)$,
which implies $F_m\langle g\rangle\simeq F_m*\langle g\rangle$.
\end{proof}

For a locally free group of finite rank,
the following result due to Mal'cev is well-known.

\begin{lemma}\mbox{{\rm(See \cite{Mal})}}\label{Mal_L}
If $G$ is a locally free group of finite rank,
then the cardinality of $G$ is countable; namely $|G|=\aleph_0$.
\end{lemma}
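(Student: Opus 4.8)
The plan is to reduce the assertion to the statement that $G$ is countably generated. Indeed, once $G = \langle x_1, x_2, \dots \rangle$ for some countable set of generators, we may write $G = \bigcup_{n \ge 1} \langle x_1, \dots, x_n \rangle$ as an ascending union of finitely generated subgroups; each of these is free (as $G$ is locally free), hence countable, so $G$ is a countable union of countable sets and $|G| = \aleph_0$. The entire difficulty therefore lies in bounding, using only the hypothesis $r(G) = m < \infty$, the number of generators that $G$ requires.

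First I would treat the degenerate ranks directly. If $m = 0$ then $G$ is trivial, while if $m = 1$ every finitely generated subgroup of $G$ embeds in a free group of rank one and so is trivial or infinite cyclic; thus $G$ is torsion-free and locally cyclic, and a routine common-refinement argument embeds $G$ into the additive group $\mathbb{Q}$, giving $|G| = \aleph_0$. For $m \ge 2$ I would fix a finitely generated subgroup $A_0$ with $\mu_G(A_0) = m$ together with a free subgroup $F$ of rank $m$ containing $A_0$. Since $\mu_G$ is monotone in its argument and bounded above by $r(G) = m$, every $g \in G$ satisfies $m = \mu_G(A_0) \le \mu_G(F\langle g \rangle) \le m$, so $F\langle g \rangle$ lies in a free subgroup $N_g \supseteq F$ of rank exactly $m$. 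Hence the family $\mathcal F$ of rank-$m$ free subgroups containing $F$ is upward directed — the join of two members is finitely generated of rank $m$, hence contained in a third — and $G = \bigcup \mathcal F$. It then suffices to show that this directed union is exhausted by a countable ascending chain $F = N_0 \subseteq N_1 \subseteq \cdots$, since then $G$ is once more a countable union of countable free groups.

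The step I expect to be the main obstacle is precisely this last one: showing that $\mathcal F$ has countable cofinality, equivalently that the ``closure'' of the skeleton $F$ under adjunction of elements of $G$ stays countable. The finite-rank hypothesis gives no finitary obstruction to the contrary — adjoining a single element never raises the rank above $m$ (this is the situation in which the hypothesis $n > m$ of Lemma~\ref{M_lemma} simply fails), so no finitely generated configuration can by itself force uncountability. Worse, an ascending union of rank-$m$ free groups need not be free (this is exactly the mechanism behind the non-free locally free groups mentioned in the Introduction), so one cannot blindly apply Zorn's Lemma inside $\mathcal F$, and limit stages must be handled with care. To carry this out I would argue by induction on $m$: peel off one free generator of $F$, apply the inductive hypothesis to control the rank-$(m-1)$ part, and show that the additional freedom in the last direction is, as in the case $m = 1$, a locally cyclic — hence countable, being embeddable in $\mathbb{Q}$ — enrichment. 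This structural reduction, which is the substance of Mal'cev's theorem, is where the real work lies; equivalently, one shows by contradiction that a strictly increasing continuous $\omega_1$-chain of such subgroups cannot exist.
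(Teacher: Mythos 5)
Your setup is fine as far as it goes: the reduction to countable generation, the treatment of $m\le 1$ via local cyclicity and embedding into $\mathbb{Q}$, and the observation that the family $\mathcal{F}$ of rank-$m$ free subgroups containing $F$ is directed and exhausts $G$ are all correct (one small slip: the join of two members of $\mathcal{F}$ need not itself have rank $m$ as a free group — what matters is that its $\mu_G$-value equals $m$, which is what puts it inside a third member). But the proof stops exactly where the theorem begins. You say so yourself: the entire content of Mal'cev's result is that the directed family $\mathcal{F}$ has a countable cofinal chain, and for that step you offer only a plan. Note also that the paper does not prove this lemma at all — it imports it from \cite{Mal} as a known result — so there is no argument in the paper that could fill your gap for you; judged as a standalone proof, the proposal is incomplete.

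The concrete problem is that the proposed induction on $m$ has no mechanism behind it. ``Peeling off one free generator of $F$'' presupposes that the bases of the members of $\mathcal{F}$ can be chosen coherently, but they cannot: if $N_1\subsetneq N_2$ are two members with $m\ge 2$, the index is necessarily infinite (a proper finite-index subgroup of a rank-$m$ free group has rank $1+k(m-1)>m$), and $N_1$ need not lie in, or project onto, any free factor of $N_2$, so a chosen generator of one member corresponds to nothing in the next. Nor is there any global object realizing ``the rank-$(m-1)$ part'': already $G=F_2$ admits no decomposition as a locally free group of rank $1$ extended by a locally cyclic ``enrichment'' (a nontrivial normal subgroup of $F_2$ of infinite index has infinite rank), so the inductive statement you would need is not even well-formed. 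Finally, the alternative formulation — ``show a strictly increasing continuous $\omega_1$-chain of such subgroups cannot exist'' — restates the goal rather than proving it, and it hides the same difficulty you correctly flagged: at limit stages the union of a strictly increasing chain of rank-$m$ free groups leaves $\mathcal{F}$ (it is typically non-free and not finitely generated — Higman's examples), so the chain one must rule out is not a chain in $\mathcal{F}$ at all, and no argument is given for why the transfinite construction must halt at a countable stage. That missing argument is Mal'cev's theorem itself.
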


On the other hand,
if $G$ is not of finite rank,
then we have the following property:

\begin{lemma}\label{Subgr}
If $G$ is a locally free group whose rank is not finite,
then for each finitely generated subgroup $A$ of $G$,
there exists an element $x\in G$ with $x\not\in A$,
such that $A\langle x\rangle\simeq A*\langle x\rangle$,
the free product of $A$ and $\langle x\rangle$.
\end{lemma}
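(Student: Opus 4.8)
The plan is to obtain the required element $x$ as an application of Lemma~\ref{M_lemma}, and the whole difficulty is to construct an appropriate intermediate subgroup $M$ to feed into that lemma. Set $m:=\mu_{G}(A)$, a finite positive integer since $A$ is finitely generated. I would work with the collection
$$\mathcal{S}=\{\,M\leq G \ :\ A\subseteq M,\ r(M)=m\,\}.$$
This set is nonempty: since $\mu_{G}(A)=m$ there is a free subgroup $F_{m}$ of rank $m$ with $A\subseteq F_{m}$, and $r(F_{m})=m$, because every finitely generated subgroup of $F_{m}$ sits inside the rank-$m$ free group $F_{m}$ (so $\mu_{F_{m}}\leq m$ throughout), while $\mu_{F_{m}}(A)\geq\mu_{G}(A)=m$ by the monotonicity of $\mu$ noted before Lemma~\ref{M_lemma}. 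Hence $F_{m}\in\mathcal{S}$.

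Next I would check that $\mathcal{S}$ is closed under unions of chains, so that Zorn's lemma applies. If $\{M_{i}\}$ is a chain in $\mathcal{S}$ and $M_{\infty}=\bigcup_{i}M_{i}$, then any finitely generated $H\subseteq M_{\infty}$ already lies in some $M_{i}$, whence $\mu_{M_{\infty}}(H)\leq\mu_{M_{i}}(H)\leq m$; thus $r(M_{\infty})\leq m$, while $r(M_{\infty})\geq\mu_{M_{\infty}}(A)\geq m$ forces $r(M_{\infty})=m$, so $M_{\infty}\in\mathcal{S}$. Zorn's lemma then yields a maximal element $M$ of $\mathcal{S}$.

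The heart of the argument, and the step I expect to be the main obstacle, is to show that maximality of $M$ forces the rank to strictly increase upon adjoining a single generator. Because $r(M)=m$ is finite while $r(G)$ is infinite, $M$ is a proper subgroup of $G$, so I may choose any $g\in G\setminus M$ and set $M_{g}=M\langle g\rangle$. I would first establish $m\leq r(M_{g})\leq m+1$: the lower bound holds since $A\subseteq M_{g}$, and for the upper bound every finitely generated $C\subseteq M_{g}$ lies in $F\langle g\rangle$ for some rank-$m$ free $F\subseteq M$ (exactly as in the proof of Lemma~\ref{M_lemma}), and $F\langle g\rangle$, being generated by $m+1$ elements, is free of rank at most $m+1$. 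If $r(M_{g})=m$, then $M_{g}\in\mathcal{S}$ and $M\subsetneq M_{g}$, contradicting maximality; therefore $r(M_{g})=m+1>m$.

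Finally I would apply Lemma~\ref{M_lemma} with $D=A$, with this $M$, and with this $g$. Its conclusion provides a free subgroup $F_{m}\subseteq M$ of rank $m$ with $A\subseteq F_{m}$ and $F_{m}\langle g\rangle\simeq F_{m}*\langle g\rangle$. Since $A$ is a subgroup of the free factor $F_{m}$, the subgroup generated by $A$ and $g$ inside this free product is precisely $A*\langle g\rangle$; that is, $A\langle g\rangle\simeq A*\langle g\rangle$. As $g\notin M\supseteq A$ we also have $g\notin A$, so $x=g$ is the required element.
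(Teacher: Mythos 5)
Your proof is correct, but it takes a genuinely different route from the paper's. The paper proves Lemma~\ref{Subgr} directly, with no Zorn's lemma and no appeal to Lemma~\ref{M_lemma}: since $r(G)$ is not finite, one can pick a free subgroup $F=\langle x_1,\ldots,x_m\rangle$ with $A\subsetneq F$ and $r(F)>r(A)$, then adjoin the generators of $F$ to $A$ one at a time, setting $A_i=\langle A, x_1,\ldots,x_i\rangle$. Each step raises the rank by at most one, yet $A_m=F$ has rank strictly larger than $r(A)$, so some step has $r(A_i)=r(A_{i-1})+1$; at that step $A_i\simeq A_{i-1}*\langle x_i\rangle$, and since $A\subseteq A_{i-1}$ one concludes $A\langle x_i\rangle\simeq A*\langle x_i\rangle$ (the same free-product fact your last paragraph uses). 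Your argument instead runs Zorn's lemma over $\mathcal{S}=\{M : A\subseteq M,\ r(M)=\mu_G(A)\}$, shows a maximal element $M$ must have $r(M\langle g\rangle)=m+1$ for any $g\notin M$, and then invokes Lemma~\ref{M_lemma} --- which is exactly the mechanism the paper deploys later in the proof of Theorem~\ref{MainTH}, with $D$ there playing the role of your $A$. So your proof is essentially a specialization of the main theorem's argument to this lemma, which is a legitimate and instructive observation; moreover your explicit upper bound $r(M_g)\leq m+1$ is a nice touch, since it guarantees the rank is finite before Lemma~\ref{M_lemma} is applied (the paper's proof of that lemma tacitly assumes $n<\infty$ when it chooses $C$ with $\mu_{M_g}(C)=n$). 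The price is that you use transfinite machinery, and a dependence on Lemma~\ref{M_lemma}, for a statement the paper settles by a finite, elementary induction on generators.
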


\begin{proof}
Let $A$ be a finitely generated subgroup of $G$.
We have then that $A$ is a free group of finite rank,
because $G$ is locally free.
Since the rank of $G$ is not finite,
there exists a free subgroup $F$ of $G$ 
such that $A\subsetneq F$ and $r(F)>r(A)$.
Let $A=\langle y_1,\ldots, y_l\rangle$
and $F=\langle x_1,\ldots, x_m\rangle$,
where $l=r(A)$ and $m=r(F)$.
If for $A_1=\langle y_1,\ldots, y_l, x_1\rangle$,
$r(A_1)=l+1$,
then $A_1\simeq A*\langle x_1\rangle$.
If $r(A_1)\leq l$ then there exists $i\in\{2,\ldots,m\}$
such that $r(A_{i})=r(A_{i-1})+1$,
where $A_i=\langle y_1,\ldots, y_l, x_1,\ldots, x_i\rangle$.
We have then that $A_i\simeq A_{i-1}*\langle x_i\rangle$.
Since $A\subseteq A_{i-1}$,
we have thus seen that  $A\langle x\rangle\simeq A*\langle x\rangle$
for some $x\in G\setminus A$.
\end{proof}

Let $G=A*B$ be the free product of $A\ne 1$ and $B\ne 1$.
Clearly, if $|G|=\aleph_0$,
then $G$ has a free subgroup whose cardinality is the same as that of $G$.
If $|G|>\aleph_0$, then either $|A|=|G|$ or $|B|=|G|$,
say $|A|=|G|$.
Let $I$ be a set with $|I|=|A|$,
and for each $i\in I$,
let $a_i$ be in $A$ such that $a_i\ne a_j$ for $i\ne j$.
We have then that for $1\ne b\in B$,
the elements $(a_ib)^2$ over $i\in I$ freely generate the subgroup of $G$
whose cardinality is the same as that of $G$.
Hence we have

\begin{lemma}\label{FSofFPr}
If $G=A*B$ is the free product of $A$ and $B$,
then $G$ has a free subgroup
whose cardinality is the same as that of $G$.
\end{lemma}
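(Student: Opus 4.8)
The plan is to split on whether $|G|$ equals $\aleph_0$, using only the normal form theorem for free products: every nontrivial element of $G=A*B$ is uniquely a reduced alternating word in nontrivial syllables drawn from $A$ and $B$, and a reduced word of positive syllable length is never $1$. As a preliminary I would record the cardinal arithmetic $|A*B|=\max(|A|,|B|,\aleph_0)$, obtained by counting reduced words of each finite length from above and using the embedding $A\hookrightarrow G$ from below; in particular $G$ is infinite, so only the cases $|G|=\aleph_0$ and $|G|>\aleph_0$ arise.

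If $|G|=\aleph_0$, I would fix $1\ne a\in A$ and $1\ne b\in B$; since each $(ab)^n$ with $n\ge 1$ is already reduced, $ab$ has infinite order and $\langle ab\rangle\cong\mathbb{Z}$ is a free subgroup of cardinality $\aleph_0=|G|$.

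The substantive case is $|G|>\aleph_0$, where the arithmetic above forces $\max(|A|,|B|)=|G|$, say $|A|=|G|$. I would then choose an index set $I$ with $|I|=|A|$ together with pairwise distinct nontrivial elements $\{a_i\}_{i\in I}\subseteq A\setminus\{1\}$ (possible as $|A|>\aleph_0$), fix one $1\ne b\in B$, and set $x_i=(a_ib)^2=a_iba_ib$. The claim is that $\{x_i\}_{i\in I}$ freely generates a subgroup $H$; granting it, $H$ is free of rank $|I|=|G|$ and hence $|H|=|G|$, finishing the proof.

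The heart of the matter, and the step I expect to be the main obstacle, is the free-generation claim, and it is exactly here that squaring is essential: with the single products $a_ib$ the elements need not be free, since e.g. $(a_ib)(a_jb)^{-1}(a_kb)(a_lb)^{-1}=a_ia_j^{-1}a_ka_l^{-1}$ can collapse inside $A$ when $A$ carries relations. To prove the claim I would take an arbitrary nonempty reduced word $w=x_{i_1}^{\varepsilon_1}\cdots x_{i_k}^{\varepsilon_k}$ in the $x_i$ (no adjacent $x_i^{\pm1}x_i^{\mp1}$), substitute the length-four reduced forms $x_i=a_iba_ib$ and $x_i^{-1}=b^{-1}a_i^{-1}b^{-1}a_i^{-1}$, and control cancellation across the $k-1$ junctions. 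The key observations are that a junction can only produce a single $bb^{-1}$ (or $a^{-1}a$) collapse possibly followed by the merger of two adjacent $A$-syllables, that reducedness of $w$ forces the two indices at any such merger to differ so that the merged syllable $a_ia_j^{-1}$ (or $a_i^{-1}a_j$) is nontrivial, and that because each generator contains two $B$-syllables while at most one is consumed at its boundary, every generator occurrence contributes at least one surviving $B$-syllable. Hence the reduced form of $w$ is a genuinely alternating word of positive length, so $w\ne1$ by the normal form theorem and the natural map from the free group on $I$ into $G$ is injective. I would make the bookkeeping precise by checking the four junction types $x_ix_j$, $x_i^{-1}x_j^{-1}$, $x_ix_j^{-1}$ and $x_i^{-1}x_j$ (with $i\ne j$ in the last two, as reducedness demands) and exhibiting a uniform positive lower bound on the surviving length; a ping-pong argument for the action of $G$ on its reduced words would be a viable alternative.
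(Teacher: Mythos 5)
Your proposal is correct and follows essentially the same route as the paper: the same case split on $|G|=\aleph_0$ versus $|G|>\aleph_0$, and in the uncountable case the identical construction of the elements $(a_ib)^2$ with distinct $a_i\in A$ and a fixed $1\ne b\in B$. The only difference is that the paper simply asserts that these elements freely generate a subgroup, whereas you supply the cancellation bookkeeping (correctly, including the reason squaring is needed) that justifies this assertion.
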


We are now read to prove Theorem~\ref{MainTH}.

\begin{proof}[Proof of Theorem~\ref{MainTH}]
Let $A$ be a finitely generated subgroup of $G$.
We set
$$\Bs=\{ B\ |\ B \mbox{ is a non-trivial subgroup of } G 
\mbox{ such that } AB\simeq A*B\}.$$
Since $G$ is locally free,
$A$ is a free group of finite rank.
By assumption,
$|G|>\aleph_0$,
and so the rank of $G$ is not finite by Lemma \ref{Mal_L}.
Hence, by Lemma~\ref{Subgr},
there exists an element $g\in G\setminus A$ such that
$A\langle g\rangle\simeq A*\langle g\rangle$,
whence $\langle g\rangle\in \Bs$; thus $\Bs\ne \emptyset$.
Let $B_1\subseteq B_2\subseteq\cdots\subseteq B_i\subseteq\cdots$
be a chain of $B_i$'s in $\Bs$,
and let $B^{*}=\bigcup_{i=1}^{\infty}B_i$.
We can see that $B^{*}$ belongs to $\Bs$.
In fact, if not so,
then $AB^{*}\not\simeq A*B^{*}$, and so
there exists a finitely generated subgroup $C$ of $B^{*}$
such that $AC\not\simeq A*C$.
However,
because $C$ is finitely generated in $B^{*}$,
we have $C\subseteq B_i$ for some $i$,
which implies $AB_i\not\simeq A*B_i$, a contradiction.
We have thus shown that $(\Bs, \subseteq)$
is an inductively ordered set.
By Zorn's lemma, 
there exists a maximal element $H$ in $(\Bs, \subseteq)$.
We shall show $|H|=|G|$,
which completes the proof of the theorem.
In fact, $AH\simeq A*H$ 
and it has also a free subgroup
whose cardinality is the same as that of $G$
by Lemma~\ref{FSofFPr}.

Suppose, to the contrary, that $|H|<|G|$.
Set $N=AH (\simeq A*H)$, and for a finitely generated subgroup $D$ of $N$
with $\mu_{G}(D)=m$,
let $\Ms(D)$ be the set of subgroups $M$ of $G$
such that $D\subseteq M$ and $r(M)=m$.
We can see that $(\Ms(D),\subseteq)$ is an inductively ordered set
as follows:
Since $m=\mu_{G}(D)$, there exists a free subgroup $F_m$ of rank $m$ in $G$
such that $D\subseteq F_m$.
Hence $F_m\in\Ms(D)$, whence $\Ms(D)\ne\emptyset$.
Let $M_1\subseteq M_2\subseteq\cdots\subseteq M_i\subseteq\cdots$
be a chain of $M_i$'s in $\Ms$,
and let $M^{*}=\bigcup_{i=1}^{\infty}M_i$.
Clearly, $D\subseteq M^{*}$.
By the definition of the rank,
$r(M^{*})\geq \mu_{M^{*}}(D)$.
Since $M^{*}\subseteq G$,
we have that $\mu_{M^{*}}(D)\geq\mu_{G}(D)=m$;
thus $r(M^{*})\geq m$.
On the other hand, by the definition of $r(M^{*})$,
there exists a finitely generated subgroup $C$ of $M^{*}$
such that $\mu_{M^{*}}(C)=r(M^{*})$.
Since $C$ is finitely generated,
there exists $i>0$ such that
$C\subseteq M_i$, and then $\mu_{M_i}(C)\leq r(M_i)$,
which implies that
$r(M^{*})=\mu_{M^{*}}(C)\leq\mu_{M_i}(C)\leq r(M_i)=m$; 
thus $r(M^{*})\leq m$.
Hence we have $r(M^{*})=m$. We have thus proved that $M^{*}\in \Ms(D)$
and that $(\Ms(D),\subseteq)$ 
is an inductively ordered set.

Again by Zorn's lemma, 
there exists a maximal element $M(D)$ in $(\Ms(D), \subseteq)$.
Let $L=\bigcup_{D\in\Ds}M(D)$,
where $\Ds$ is the set consisting of 
all finitely generated subgroups of $N$.
Since $r(M(D))$ is finite for each $D\in\Ds$,
it follows from Lemma \ref{Mal_L}
that $|M(D)|=\aleph_0$ for each $D\in\Ds$.
Hence we have $|L|<|G|$ because
$|\Ds|=|N|<|G|$.
In particular,
there exists $g\in G$ such that $g\not\in L$.
Note that $g\not\in N$ because of $N\subset L$,
and so $g\not\in H$.
We shall show that $N\langle g\rangle\simeq N*\langle g\rangle$.
In order to do this,
it suffices to show that for each $D\in\Ds$, 
$D\langle g\rangle\simeq D*\langle g\rangle$ holds.

Let $r(M(D))=m$ for $D\in\Ds$
and let $M_g=M(D)\langle g\rangle$.
Since $D\subseteq M_g$ and $\mu_{M_g}(D)\geq\mu_G(D)=m$,
it follows that $r(M_g)\geq m$.
Moreover,
since $g\not\in M(D)$,
we have $M(D)\subsetneq M_g$.
Hence
the maximality of $M(D)$
implies $r(M_g)>m$.
It follows from Lemma~\ref{M_lemma}
that there exists a free subgroup $F_m$ of rank $m$ in $M(D)$ 
such that $D\subseteq F_m$
and $F_m\langle g\rangle\simeq F_m*\langle g\rangle$.
Hence we have $D\langle g\rangle\simeq D*\langle g\rangle$.

We have thus shown that $N\langle g\rangle\simeq N*\langle g\rangle$,
which contradicts the maximality of $H$
because $N*\langle g\rangle=A*(H*\langle g\rangle)$. 
This completes the proof of the theorem.
\end{proof}

Theorem~\ref{MainTH}
shows that the assumption on 
existence of free subgroups in \cite[Theorem 1]{Ni11}
can be dropped.
That is, we have the following theorem:

\newpage

\begin{theorem}\label{LFG}
Let $G$ be a non-abelian locally free group.
If $R$ is a domain with $|R|\leq |G|$,
then $RG$ is primitive.

In particular, $KG$ is primitive for any field $K$.
\end{theorem}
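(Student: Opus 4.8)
The plan is to derive Theorem~\ref{LFG} from \cite[Theorem 1]{Ni11} by showing that the standing hypothesis of that result---that $G$ contain a free subgroup whose cardinality equals $|G|$---is in fact automatic for every non-abelian locally free group. Recall that \cite[Theorem 1]{Ni11} already yields exactly the conclusions we want, namely primitivity of $RG$ for every domain $R$ with $|R|\le|G|$ and, in particular, of $KG$ for every field $K$, under the extra assumption that such a free subgroup exists. Thus the whole content of the deduction is group-theoretic: once the existence of a free subgroup of full cardinality is guaranteed in all cases, nothing further about rings need be proved, and the ring-theoretic conclusions transfer verbatim from \cite{Ni11}.

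First I would dispose of the matter by a split on the cardinality of $G$. If $|G|>\aleph_0$, then Theorem~\ref{MainTH} applies directly and produces a free subgroup of $G$ whose cardinality equals $|G|$; since a free group of cardinality $>\aleph_0$ must have uncountable rank, this subgroup is in particular non-abelian. If instead $|G|=\aleph_0$, I would argue by hand: as $G$ is non-abelian there exist $a,b\in G$ with $ab\ne ba$, and because $G$ is locally free the finitely generated subgroup $\langle a,b\rangle$ is free. It cannot be cyclic, since its generators do not commute, so it is free of rank at least $2$, hence non-abelian and of cardinality $\aleph_0=|G|$. In either case $G$ possesses a non-abelian free subgroup $F$ with $|F|=|G|$, which is precisely the hypothesis required.

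With this in hand the proof closes by invoking \cite[Theorem 1]{Ni11}: its free-subgroup hypothesis is now verified for an arbitrary non-abelian locally free $G$, so $RG$ is primitive for every domain $R$ satisfying $|R|\le|G|$, and the special case $R=K$ gives the final assertion for fields.

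I expect the only genuinely new ingredient to be the construction underlying Theorem~\ref{MainTH}; relative to that, the present argument carries no serious obstacle. The single point demanding care is the case split itself: Theorem~\ref{MainTH} is stated only for $|G|>\aleph_0$, so the countable case must be treated separately, and there one must check that non-abelianity really forces rank at least $2$ (so that the resulting free subgroup is large enough for \cite{Ni11}; a free subgroup of rank $1$ would be useless, as $R\langle a\rangle$ is commutative and not primitive). Beyond this, the deduction is bookkeeping, and the assertion that ``the assumption on existence of free subgroups can be dropped'' is made precise exactly by the case analysis above.
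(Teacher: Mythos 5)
Your proposal is correct and matches the paper's intended argument: the paper gives no separate proof of this theorem, stating only that Theorem~\ref{MainTH} lets one drop the free-subgroup hypothesis of \cite[Theorem 1]{Ni11}, which is precisely your deduction. Your explicit case split (uncountable $|G|$ via Theorem~\ref{MainTH}, countable $|G|$ via a rank-$\geq 2$ free subgroup generated by two non-commuting elements) is exactly the reasoning the paper leaves implicit, the countable case being the observation already made in its introduction.
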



\end{document}